\begin{document}
\title{On the Gromov--Hausdorff distance between the cloud of bounded
metric spaces and a cloud with nontrivial stabilizer}
\author{B.\,A.~Nesterov}
\date{}
\maketitle

\begin{abstract}
The paper studies the class of all metric spaces considered up to zero Gromov--Hausdorff distance between them. In this class, we examine clouds --- classes of spaces situated at finite Gromov--Hausdorff distances from a reference space. We prove that all clouds are proper classes. The Gromov--Hausdorff distance is defined for clouds similarly with the case of that for metric spaces. A multiplicative group of transformations of clouds is defined which is called stabilizer. 
    We show that under certain restrictions the distance between the cloud of bounded metric spaces and a cloud with a nontrivial stabilizer is finite. In particular, the distance between the cloud of bounded metric spaces and the cloud containing the real line is calculated.
    
{\bf Keywords:} metric spaces, Gromov--Hausdorff distance, clouds, proper class
\end{abstract}

\section{Intodiction}
\markright{\thesection.~Introduction}

The present paper is devoted to the study of the Gromov--Hausdorff distance ~\cite{Edwards, Gromov81, Gromov99},
defined on the proper class of all non-empty metric spaces considered up to isometry. It is known that in this class, the distance is a generalized pseudometric i.e., the distance may be zero for non-equal elements, and the distance can take infinite values.  

Traditionally, the Gromov--Hausdorff distance is studied on the class of compact metric spaces considered up to isometry. This class is called the Gromov--Hausdorff space. On it, the distance becomes a metric. Below, the Gromov--Hausdorff distance between spaces $ X $ and $ Y $ will be denoted by $ d_{GH}(X, Y) $ or $ |X, Y| $.

M. Gromov introduced this distance in \cite{Gromov81} and used it to prove the theorem on groups of polynomial growth.  

Later, this distance found applications in computational geometry, where it was used for shape matching and similarity measurement \cite{memoli1}. The Gromov--Hausdorff distance can also be applied in robotics for motion planning \cite{robotics}.  

Computing the Gromov--Hausdorff distance is algorithmically an NP-hard problem, and to simplify calculations, the distance is often modified, see, for example, \cite{memoli2}.  

In \cite{Gromov99}, M. Gromov also considered the Gromov--Hausdorff distance on classes of unbounded spaces that are at a finite distance from each other. In the present paper these classes will be called \emph{clouds} and are the primary subject of the present work. Gromov stated that all clouds are complete and contractible but he did not provide proofs for these facts \cite{Gromov99}. Subsequently, S. A. Bogatyi and A. A. Tuzhilin in \cite{TuzhBog1} proved the completeness of clouds. However, the problem of contractibility turned out to be significantly more challenging.  

First, we note that contractibility is a topological concept, as it relies on continuous mappings. Recall that in the von Neumann-Bernays-G\"odel (NBG) set theory, every object is a class, which can either be a set or a proper class. By definition, a set is an element of another class \cite{Neumann, Bernays, Godel}. A proper class cannot ba an element of another class. Thus, for proper classes, it is impossible to define a topology in the usual sense, since the class itself would then have to be an element of it. In \cite{BorIvTuzh1}, a generalized notion of topology and continuous mappings for proper classes was introduced using the concept of filtration by sets. If such a filtration exists in a class, the class is called \emph{topological}. In the present work, we prove that every cloud is a proper class. Therefore, to meaningfully discuss the contractibility of clouds, a generalization of topology is required, as, for example, developed in \cite{BorIvTuzh1}.

However, generalizing topology alone is not sufficient. To illustrate this, we introduce several additional concepts.  
For any metric space, one can define an operation of multiplication by a positive real number $\lambda$. Under this operation $H_{\lambda}\colon X \to \lambda X$, all distances in the metric space $X$ are scaled by $\lambda$. Moreover, in the case of bounded metric spaces, this operation can be extended to zero by setting $0 \cdot X := \Delta_1$, where $\Delta_1$ is a one-point metric space.  

It is well known that for any bounded spaces $X$, $Y$ and non-negative real numbers $\lambda$, $\mu$, the following holds:  
$$
|\lambda X, \mu X| = |\lambda - \mu| \cdot |X, \Delta_1| = \frac{1}{2} |\lambda - \mu| \cdot \text{diam}\, X,
$$  
where $\text{diam}\, X$ is the diameter of $X$. Also, for any metric spaces $X,Y$ the following holds:
$$
|\lambda X, \lambda Y| = \lambda |X, Y|,
$$  
  
From these properties, it is straightforward to show that the cloud of bounded metric spaces is indeed contractible.  

However, if we consider a cloud containing $\mathbb{R}^n$, the operation $H_\lambda$ maps the cloud into itself for all $\lambda$ but it is discontinuous at certain points. Moreover, there exist spaces that, when multiplied by some positive real numbers, are mapped to spaces at infinite Gromov--Hausdorff distance \cite{TuzhBog1}. This implies that the clouds containing them are not preserved under such scaling.  
From the properties mentioned above, it follows that if multiplying a space by $\lambda$ keeps it within its own cloud, then all spaces in that cloud also remain within it. Moreover, if a space transitions into another cloud under such scaling, then all spaces from the source cloud transition to the same target. Thus, the operation of multiplication by $\lambda$ can be naturally extended to clouds themselves.  

As discussed earlier, the mapping $H_\lambda$ posesses nontrivial properties, which motivates further investigation. To study the operation $H_\lambda$, the concept of a \emph{stabilizer} of a cloud was introduced: it is the multiplicative group of all positive $\lambda$ for which $H_\lambda$ maps the cloud into itself. In \cite{TuzhBog2}, the notion of a \emph{center} of a cloud was defined as a space that, under the action of transformations from the stabilizer, maps to a space at zero Gromov--Hausdorff distance from itself. It was also shown that every cloud has a unique center up to zero distances. The concepts of the stabilizer and the center of a cloud play a key role in this work.  

The present study primarily focuses on investigating the Gromov--Hausdorff distance between clouds, one of which is the cloud of bounded metric spaces. We formulate and prove a theorem concerning the image of $\Delta_1$ under a correspondence with finite distortion between the cloud of bounded metric spaces and a cloud with a nontrivial stabilizer. As a corollary, we establish a theorem stating that the distance from the cloud of bounded metric spaces to clouds with nontrivial stabilizers with some additional assumptions is infinite. As an example, we show that the Gromov--Hausdorff distance between the cloud of bounded metric spaces and the cloud which contains the real line is infinite. 

The author expresses deep gratitude to their advisor, A. A. Tuzhilin, and Professor A. O. Ivanov for formulating the problem and for fruitful discussions of the results. 

\section{Preliminaries}
\markright{\thesection.~Preliminaries}

Let $ X $ and $ Y $ be metric spaces. A distance between them, called the \emph{Gromov--Hausdorff distance}, can be defined. Below, we present two equivalent definitions \cite{Lectures}.

\begin{defin}  
Let $ X $ and $ Y $ be metric spaces. A \emph{correspondence} $ R $ between these spaces is a surjective multivalued mapping from $ X $ to $ Y $. The set of all correspondences between $ X $ and $ Y $ is denoted by $ \mathcal{R}(X,Y) $. We will also identify a correspondence with its graph.  
\end{defin}

\begin{defin}  
Let $ R $ be a correspondence between $ X $ and $ Y $. The \emph{distortion} of $ R $ is defined as  
$$
\dis{R} = \sup \Bigl\{ \big| |xx'| - |yy'| \big| : (x, y), (x', y') \in R \Bigr\}.  
$$  
Then, the \emph{Gromov--Hausdorff distance} $ d_{GH}(X,Y) $ can be defined as  
$$
d_{GH}(X,Y) = \frac{1}{2} \inf \bigl\{ \dis{R} : R \in \mathcal{R}(X,Y) \bigr\}.  
$$  
\label{defSootvet}  
\end{defin}
\begin{defin}
  Let $X,Y$ be subsets of a metric space $Z$. We define the Hausdorff distance between $X,Y$ as follows:
  $$
  d_H(X, Y) \coloneqq \max \left\{ \sup_{x \in X} d(x, Y), \sup_{y \in Y} d(X, y) \right\}.
  $$
\end{defin}
\begin{defin}  
A \emph{realization} of a pair of metric spaces $ (X,Y) $ is a triple $ (X', Y', Z) $ of metric spaces such that:  
$ X' \subset Z $ and $ Y' \subset Z $,  
$ X' $ is isometric to $ X $ and 
$ Y' $ is isometric to $ Y $.  

The \emph{Gromov--Hausdorff distance} $ d_{GH}(X,Y) $ between $ X $ and $ Y $ is the infimum of all numbers $ r $ for which there exists a realization $ (X', Y', Z) $ satisfying $ d_H(X', Y') \leq r $, where $ d_H $ is the Hausdorff distance.  
\end{defin}  

Henceforth, the Gromov--Hausdorff distance between metric spaces $ X $ and $ Y $ will be denoted by $ |X,Y| $.  

 We will consider two metric spaces equivalent if they are a zero Gromov--Hausdorff distance from each other. The resulting class is denoted by $ \mathcal{GH}_0 $. On this class, the Gromov--Hausdorff distance becomes a \emph{generalized metric}.  

\begin{defin}[\cite{TuzhBog1}]
In the class $\mathcal{GH}_{0}$, we consider the following relation: $X \thicksim Y \Leftrightarrow d_{GH}(X, Y) < \infty$. It is easy to verify that this is an equivalence relation. The equivalence classes of this relation are called \emph{clouds}. The cloud containing a metric space $X$ will be denoted by $[X]$.
\end{defin}

For any metric space $X$, we can define an operation of multiplication by a positive real number $\lambda\colon X\mapsto \lambda X$, specifically $(X, \rho) \mapsto (X, \lambda \rho)$, where the distance between any two points of the space is scaled by a factor of $\lambda$.

\begin{remark}
Let metric spaces $X$, $Y$ belong to the same cloud. Then $d_{GH}(\lambda X, \lambda Y) = \lambda d_{GH}(X,Y)\allowbreak < \infty$, which means the spaces $\lambda X$, $\lambda Y$ will also belong to the same cloud.
\label{remOneCloud}
\end{remark}

\begin{defin}
We define the operation of multiplying a cloud $[X]$ by a positive real number $\lambda$ as the mapping that transforms all spaces $Y \in [X]$ into spaces $\lambda Y$. According to Remark \ref{remOneCloud}, all resulting spaces will belong to the cloud $[\lambda X]$.
\end{defin}

Under such a mapping, a cloud may either change or remain invariant. For the latter case, we introduce a special definition.

\begin{defin}[\cite{TuzhBog2}]
The \emph{stabilizer} $\St\bigl([X]\bigr)$ of a cloud $[X]$ is the subset of $\mathbb{R}_+$ such that for all $\lambda \in \St\bigl([X]\bigr)$, $[X] = [\lambda X]$. This subset forms a subgroup of $\mathbb{R}_+$. The stabilizer is called \emph{trivial} when it equals $\{1\}$.
\end{defin}

Let us provide several examples of clouds and their stabilizers.

\begin{itemize}
    \item Let $\Delta_1$ be a one-point metric space. Then $\St\bigl([\Delta_1]\bigr) = \mathbb{R}_+$.
    \item $\St\bigl([\mathbb{R}]\bigr) = \mathbb{R}_+$.
    \item Suppose a function $\phi(n)$ satisfies $\lim\limits_{n \rightarrow \infty} \phi(n + 1) - \phi(n) = +\infty$. For $q > 1$, define the space $X_q = \left\{q^{\phi(n)}: n \in \mathbb{N}\right\}$. Then $\St\bigl([X_q]\bigr) = \{1\}$ \cite{TuzhBog1}.
    \item For a natural number $p$, define the space $X_p = \left\{p^n : n \in \mathbb{Z}\right\}$. For any prime $p$, we have $\St\bigl([X_p]\bigr) = \left\{p^n : n \in \mathbb{Z}\right\}$ \cite{BogBog1}.
\end{itemize}

\begin{lemma}[\cite{TuzhBog2}]
    In every cloud with a nontrivial stabilizer, there exists a unique space $X$ such that for any $\lambda$ from the stabilizer, $X = \lambda X$ holds.
    \label{centerLemma}
\end{lemma}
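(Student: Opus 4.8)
The plan is to dispose of uniqueness first and then obtain existence by a fixed-point argument that exploits the completeness of clouds established in \cite{TuzhBog1}. Throughout, let $C$ denote the given cloud and $\St(C) \subseteq \mathbb{R}_+$ its (nontrivial) stabilizer; equality of spaces is understood in $\mathcal{GH}_0$, i.e. up to zero distance.

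For uniqueness, I would suppose $X$ and $X'$ both satisfy $X = \lambda X$ and $X' = \lambda X'$ for all $\lambda \in \St(C)$. Since they lie in the same cloud, $c := |X, X'| < \infty$. Choosing any $\lambda \neq 1$ in the stabilizer and using $|\lambda X, \lambda X'| = \lambda |X, X'|$ gives $c = |X, X'| = |\lambda X, \lambda X'| = \lambda c$, which forces $c = 0$. Hence an invariant space, if it exists, is unique up to zero distance.

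For existence, the key observation is that scaling by $\lambda$ is an honest self-map of $C$ whenever $\lambda \in \St(C)$, and it respects zero distance, so it descends to a map on $C \subseteq \mathcal{GH}_0$. Because $\St(C)$ is a nontrivial subgroup of $\mathbb{R}_+$, it contains some $\nu$ with $0 < \nu < 1$. The map $G \colon Z \mapsto \nu Z$ then satisfies $|G(Z), G(W)| = \nu |Z, W|$, i.e. it is a $\nu$-contraction of $C$; here one must use a factor $\nu < 1$ rather than $> 1$, since scaling by $q > 1$ would be an \emph{expansion} and the fixed-point method would fail. Within a single cloud all distances are finite, so $\bigl(C, |\cdot,\cdot|\bigr)$ is a genuine complete metric space (completeness by \cite{TuzhBog1}). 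Crucially, although $C$ is a proper class, the Banach iteration manipulates only the countable orbit $Z_0, G(Z_0), G^2(Z_0), \dots$, which is a set, so the Contraction Mapping Principle applies verbatim and yields $X \in C$ with $|X, \nu X| = 0$.

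The main obstacle is upgrading invariance under the single factor $\nu$ to invariance under all of $\St(C)$ — this is delicate precisely when $\St(C)$ is dense in $\mathbb{R}_+$, where the powers of $\nu$ do not exhaust the stabilizer. I would resolve it using the \emph{uniqueness} of the contraction's fixed point: for any $\mu \in \St(C)$, the space $\mu X$ again lies in $C$, and since scaling factors commute, $|G(\mu X), \mu X| = |\mu\nu X, \mu X| = \mu\,|\nu X, X| = 0$, so $\mu X$ is also a fixed point of $G$. By Banach uniqueness $\mu X = X$, i.e. $X = \mu X$ for every $\mu \in \St(C)$. Thus $X$ is the required invariant space, and uniqueness of the fixed point of $G$ gives a second, independent proof of uniqueness of the center.
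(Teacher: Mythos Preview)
The paper does not prove this lemma; it is simply quoted from \cite{TuzhBog2} and used as a black box, so there is no ``paper's own proof'' to compare against.

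That said, your argument is correct. The uniqueness step is the standard one-line scaling trick, and your existence proof via the Banach contraction principle is sound: the choice of a contraction factor $\nu<1$ in $\St(C)$, the appeal to completeness of clouds from \cite{TuzhBog1}, and your observation that the iteration only touches the countable orbit $\{G^n(Z_0)\}$ (so the proper-class nature of $C$ causes no foundational trouble) all hold up. The final step---showing that the fixed point of $G$ is automatically fixed by every $\mu\in\St(C)$ by noting that $\mu X$ is another fixed point of $G$ and invoking Banach uniqueness---is a clean way to handle the dense-stabilizer case, and it is exactly the kind of argument one finds in the cited source.
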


\begin{defin}
	The metric space from the Lemma \ref{centerLemma} will be called the \emph{center} of the cloud.
\end{defin}
\begin{remark} For any metric space $X$ in the cloud $[\Delta_1]$:
	$$|\lambda X, \mu X| = |\lambda - \mu|\cdot|X,\Delta_1|.$$
\end{remark} \begin{remark}[Ultrametric inequaliy] For any metric spaces $X_{1}, X_{2}$ from the cloud$[\Delta_{1}]$ the following inequality holds:
	$$|X_{1},X_{2}| \le \max\{|X_{1}, \Delta_{1}|,|X_{2},\Delta_{1}|\}.$$
	\label{remUltraMetric}
\end{remark}
\section{Cloud cardinality}
\markright{\thesection.~Cloud cardinality}

By definition, metric spaces are sets. Therefore, to extend the Gromov--Hausdorff distance construction to clouds, we must either establish that clouds are sets or appropriately modify the distance definition.

We employ the lemma concerning the nature of cardinal number sets:

\begin{lemma}[\cite{levySet}]
Any set of cardinal numbers has an upper bound.
\end{lemma}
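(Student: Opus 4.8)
The plan is to exploit the identification of cardinal numbers with initial ordinals together with the fact that $S$, being a \emph{set} rather than a proper class, is subject to the Axiom of Union. First I would recall that in NBG (as in ZFC) every cardinal number is realized as an ordinal, that ordinals are transitive sets well-ordered by the membership relation, and that for ordinals $\lambda \le \mu$ is the same as $\lambda \subseteq \mu$. Thus comparing cardinals reduces to the inclusion order on the corresponding sets.

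Given a set $S$ of cardinal numbers, I would form the union $\beta := \bigcup S = \bigcup_{\lambda \in S}\lambda$. Because $S$ is a set, the Axiom of Union guarantees that $\beta$ is again a set. I would then invoke the standard fact that the union of a set of ordinals is itself an ordinal, namely $\sup S$, verifying it by checking that $\beta$ is transitive and that $\in$ linearly (hence well-)orders it. By construction $\lambda \subseteq \beta$ for every $\lambda \in S$, so $\lambda \le \beta$ as ordinals.

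It then remains to exhibit a \emph{cardinal} bound. The cleanest route is to observe that $\beta = \sup S$ is in fact already a cardinal: were it not, we would have $|\beta| < \beta$, and since $\beta$ is the supremum there would exist some $\lambda \in S$ with $|\beta| < \lambda \le \beta$; but $\lambda \le \beta$ forces $\lambda = |\lambda| \le |\beta|$, contradicting $|\beta| < \lambda$. Hence $\beta$ itself is a cardinal satisfying $\lambda \le \beta$ for all $\lambda \in S$, so it is the required (indeed least) upper bound. Alternatively one may simply take $|\beta|$ and note $\lambda = |\lambda| \le |\beta|$ for each $\lambda \in S$.

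The argument is routine, so there is no real obstacle in the usual sense; the single point of care — and the feature the paper will rely on — is that everything hinges on applying the Axiom of Union to the \emph{set} $S$. For the proper class of \emph{all} cardinals no upper bound exists, since that class is unbounded, so the hypothesis that $S$ is a set is essential. This is exactly the set/proper-class dichotomy the paper will use to determine whether a cloud can be a set.
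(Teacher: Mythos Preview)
Your argument is correct and is the standard proof via the Axiom of Union and the identification of cardinals with initial ordinals. Note, however, that the paper does not supply any proof of this lemma: it is simply quoted with a citation to Levy's \emph{Basic Set Theory} and used as a black box, so there is no in-paper argument to compare against; your write-up is therefore supplementary rather than a reconstruction.
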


The following corollary from this lemma will be necessary for our proof:

\begin{corollary}\label{colCardinal}
The class of unbounded cardinals is proper.
\end{corollary}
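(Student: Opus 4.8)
The plan is to argue by contradiction, using the preceding Lemma together with the fact that there is no largest cardinal. I read the statement as asserting that the class of all cardinal numbers is a proper class, i.e. is not a set. So suppose, to the contrary, that the class of all cardinal numbers is a set. Then it is in particular a set of cardinal numbers, and hence by the Lemma it possesses an upper bound: there is a cardinal $\kappa$ such that $\mu \le \kappa$ for every cardinal $\mu$.

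The contradiction then comes from producing a cardinal strictly larger than $\kappa$. I would invoke Cantor's theorem: for a set $K$ with $|K| = \kappa$ one has $|\mathcal{P}(K)| = 2^{\kappa} > \kappa$, so $2^{\kappa}$ is a cardinal exceeding the supposed upper bound $\kappa$. (Equally well, the successor cardinal $\kappa^{+}$ already does the job.) Either construction contradicts the choice of $\kappa$ as an upper bound for all cardinals, so the assumption fails and the class of all cardinal numbers cannot be a set.

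Finally, since in the NBG framework every object is a class, and a class that is not a set is by definition proper, the class of all cardinals is proper, as claimed. The argument is short, and I do not anticipate a genuine technical obstacle; the only points requiring care are conceptual. One must keep the NBG setting in mind so that \textquotedblleft is proper\textquotedblright\ is read precisely as \textquotedblleft is a class but not a set,\textquotedblright\ and one must note that the bound furnished by the Lemma is itself a cardinal, against which Cantor's theorem (or passage to the successor cardinal) may be applied to force the contradiction.
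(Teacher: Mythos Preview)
Your argument is correct and is exactly the intended deduction: the paper states the corollary without proof, as an immediate consequence of the lemma that every set of cardinals has an upper bound, and your contradiction via Cantor's theorem (or the successor cardinal) is the standard way to fill in that step. Your reading of the phrase ``unbounded cardinals'' as the full class of cardinals is appropriate; the same reasoning applies verbatim if one reads it instead as the class of infinite cardinals.
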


We now formulate and prove the theorem about the class of spaces within each cloud:

\begin{theorem}
All clouds are proper classes.
\end{theorem}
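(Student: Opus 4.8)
The plan is to show that the cloud $[X]$ is proper by exhibiting inside it metric spaces of arbitrarily large cardinality, and then transferring the problem to cardinal numbers via the Replacement axiom of NBG. Concretely, consider the class function that assigns to each metric space its cardinality. If the cloud were a set, its image under this function would be a set of cardinals, hence bounded above by the cited lemma. So the whole argument reduces to proving that $[X]$ contains spaces of unbounded cardinality: the image of the cardinality map will then be an unbounded class of cardinals, which is proper by Corollary~\ref{colCardinal}, forcing the domain $[X]$ to be proper as well (the image of a set under a class function is a set).

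The heart of the proof is therefore the construction step. Fix a representative $X$ of the cloud and a point $x_0 \in X$. For an arbitrary cardinal $\kappa$ I would take a set $S$ of cardinality $\kappa$ disjoint from $X$ and put a metric on $Y_\kappa := X \sqcup S$ that extends the metric of $X$, sets $|ss'|_{Y_\kappa} = 1$ for distinct $s,s' \in S$, and sets $|sx|_{Y_\kappa} = |x_0 x|_X + \tfrac12$ for $s \in S$, $x \in X$. A routine check of the four triangle-inequality cases (two points of $X$; two of $S$; mixed pairs) confirms that this is a genuine metric, so $Y_\kappa$ is a metric space with $|Y_\kappa| = \max\{|X|,\kappa\} \geq \kappa$.

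It then remains to verify that $Y_\kappa$ lies in the cloud $[X]$, i.e.\ that $|X, Y_\kappa| < \infty$ uniformly in $\kappa$. For this I would use the correspondence $R = \{(x,x) : x \in X\} \cup \{(x_0,s) : s \in S\}$, which is surjective onto both $X$ and $Y_\kappa$. Its distortion is bounded independently of $\kappa$: identical pairs contribute $0$, a pair $(x,x),(x_0,s)$ contributes $\big||xx_0|_X - (|x_0x|_X + \tfrac12)\big| = \tfrac12$, and a pair $(x_0,s),(x_0,s')$ contributes $|0 - 1| = 1$, so $\dis{R} \le 1$ and $|X,Y_\kappa| \le \tfrac12$. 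Hence every $Y_\kappa$ belongs to $[X]$, and the cardinality map on $[X]$ is unbounded, completing the argument. The main obstacle is precisely this construction — producing arbitrarily large spaces while keeping the distortion bounded uniformly in $\kappa$ — together with the care needed to phrase the final reduction in NBG so that the cardinality map is a legitimate class function and Replacement applies; the verification of the metric axioms and of the distortion bound, though it must be done, is routine once the construction is fixed.
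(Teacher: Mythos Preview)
Your proposal is correct and follows essentially the same approach as the paper: both reduce via Corollary~\ref{colCardinal} to exhibiting spaces of arbitrarily large cardinality in the cloud, and both do this by gluing a large simplex to $X$ along a fixed base point and checking that the resulting space stays at bounded Gromov--Hausdorff distance from $X$. The only differences are cosmetic---your offset is $\tfrac12$ rather than $1$, you bound the distance via an explicit correspondence where the paper uses the realization definition, and you spell out the Replacement step more carefully than the paper does.
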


\begin{proof}
Following Corollary \ref{colCardinal}, it suffices to show that any cloud contains spaces of arbitrarily large cardinality. Let $X$ be a metric space of cardinality $\alpha$. We extend this space to one of greater cardinality $\beta > \alpha$ by constructing $X_\beta = X \cup \Delta_\beta$, where $\Delta_\beta$ is a simplex of cardinality $\beta$.

Fix an arbitrary point $x \in X$ and set the distance from $x$ to any simplex point as 1. For $x' \in X$ and $y \in \Delta_\beta$, define:
$$
\rho_{X_\beta}(y,x') = \rho_{X_\beta}(x',y) := \rho_X(x',x) + 1.
$$
All other distances remain unchanged.

The resulting space $X_\beta$ is indeed metric:
\begin{enumerate}
\item Symmetry and non-negativity are obvious
\item The triangle inequality holds for all cases:
\begin{itemize}
\item When $x', z' \in \Delta_\beta$ or $x', z' \in X$: trivial.
\item When $x' \in X$, $z' \in \Delta_\beta$:
\begin{itemize}
\item if $y' \in X$: $\rho_{X_\beta}(x',z') = \rho_X(x,x') + 1 \leq \rho_X(x,y') + \rho_X(y',x') + 1$,
\item if $y' \in \Delta_\beta$: $\rho_{X_\beta}(x',z') = \rho_X(x,x') + 1 \leq \rho_X(x',x) + 2$.
\end{itemize}
\end{itemize}
\end{enumerate}

Since $X$ can be isometrically embedded into $X_\beta$ with $X_\beta$ lies in a closed 1-neighborhood of $X$, their Gromov--Hausdorff distance is finite.
\end{proof}

\begin{remark}
As all clouds are proper classes, a bijection exists between any two clouds. This implies, in particular, that the class of correspondences between any two clouds is non-empty.
\end{remark}

\begin{defin}
Let $\mathcal{R}([X],[Y])$ denote the class of all correspondences between clouds $[X]$ and $[Y]$. We define the distortion $\text{dis}\, R$ as in Definition \ref{defSootvet}. The Gromov--Hausdorff distance between clouds is:
$$
d_{GH}([X],[Y]) = \frac{1}{2}\inf\{\text{dis}\, R : R \in \mathcal{R}([X],[Y])\}.
$$
\end{defin}

\section{Center image theorem}
\markright{\thesection.~Center image theorem}

Before proving the main theorem of this section we must list several key properties of correspondences between clouds on which our proof will rely.
\begin{lemma}\label{lemDiamImage}
  If $R$ is a correspondence between two clouds with distortion $r$ then for any spaces $Y_1,Y_2$ which lie in $R(X)$, $|Y_1,Y_2| \le r$.
\end{lemma}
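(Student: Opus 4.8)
The plan is to apply the definition of distortion directly to a suitably chosen pair of elements of the correspondence $R$. The key observation is that the two spaces $Y_1, Y_2 \in R(X)$ arise from the \emph{same} source space $X$, so the pairs $(X, Y_1)$ and $(X, Y_2)$ both lie in $R$ and share their first coordinate. I will use that the Gromov--Hausdorff distance from any metric space to itself vanishes, so that $|X, X| = 0$.

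First I note that $Y_1, Y_2 \in R(X)$ means precisely that $(X, Y_1) \in R$ and $(X, Y_2) \in R$. Substituting these two pairs into the definition of distortion yields
$$
\big| \, |X, X| - |Y_1, Y_2| \, \big| \le \dis{R} = r.
$$
Since $|X, X| = 0$, this simplifies to $|Y_1, Y_2| \le r$, which is exactly the assertion of the lemma.

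I do not expect any serious obstacle here: the result follows at once from the definition of the distortion of a correspondence between clouds, once one realizes that choosing both pairs to have the same first coordinate isolates the distance $|Y_1, Y_2|$. The only ingredients are that the Gromov--Hausdorff distance is a generalized metric, so $|X, X| = 0$, and that the supremum defining $\dis{R}$ ranges over \emph{all} pairs of elements of $R$, including pairs that coincide in their first coordinate. The genuine content of the lemma is conceptual rather than computational: it records that a correspondence of finite distortion cannot send a single space to a set of mutually distant images, which is the bound that later arguments about images of the center will exploit.
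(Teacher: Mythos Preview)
Your proof is correct and follows exactly the same argument as the paper: both apply the definition of distortion to the pairs $(X,Y_1),(X,Y_2)\in R$ and use $|X,X|=0$ to conclude $|Y_1,Y_2|\le\operatorname{dis} R=r$. The paper's version is simply more terse.
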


\begin{proof}
If spaces $Y_{1}, Y_{2}$ lie in the image of $X$, then
$$
\operatorname{dis} R \geq \big|\left|Y_{1},Y_{2}\right| - \left|X,X\right|\big| = |Y_{1}, Y_{2}|,
$$
from which $\operatorname{diam} R(X) \leq \operatorname{dis} R$ follows.
\end{proof}

\begin{corollary}
  Suppose than $R$ is a correspondence between two clouds with distortion $r$ and $Y_1,Y_2$ are metric spaces such that $|Y_1,Y_2| > r$. Then $Y_1$ and $Y_2$ cannot lie in the image of a single metric space.
\end{corollary}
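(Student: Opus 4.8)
The plan is to obtain this statement as the logical contrapositive of Lemma \ref{lemDiamImage}, so that no new work is required beyond matching up the quantifiers. Concretely, I would argue by contradiction: suppose there did exist a single metric space $X$ whose image $R(X)$ contains both $Y_1$ and $Y_2$. Then $Y_1$ and $Y_2$ are two spaces lying in $R(X)$, which is exactly the hypothesis of Lemma \ref{lemDiamImage}, and that lemma applies verbatim to this $X$.

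Applying Lemma \ref{lemDiamImage} to such an $X$ yields $|Y_1, Y_2| \le r$, where $r$ is the distortion of $R$. This directly contradicts the standing assumption $|Y_1, Y_2| > r$. Since the assumption that $Y_1$ and $Y_2$ both belong to $R(X)$ for some $X$ leads to a contradiction, no such $X$ can exist; equivalently, $Y_1$ and $Y_2$ cannot lie in the image of a single metric space. This completes the argument.

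I expect essentially no obstacle here, because the entire substance of the claim is already carried by Lemma \ref{lemDiamImage}, whose proof bounds $\operatorname{diam} R(X)$ by the distortion using only that $|X, X| = 0$. The one point worth verifying is purely logical rather than geometric: that the informal phrase ``lie in the image of a single metric space'' is precisely the condition ``$Y_1, Y_2 \in R(X)$ for some metric space $X$'' under which the lemma delivers the bound $|Y_1, Y_2| \le r$. Once that identification is made explicit, the corollary follows immediately by contraposition.
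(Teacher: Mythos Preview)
Your proposal is correct and matches the paper's approach: the corollary is stated in the paper without proof precisely because it is the immediate contrapositive of Lemma~\ref{lemDiamImage}, exactly as you argue. There is nothing to add.
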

Now we are ready to formulate and prove the theorem about the image of $\Delta_1$.
\begin{theorem}\label{thrmCenterImage}
Let $M$ be the center of the cloud $[M]$ with a nontrivial stabilizer. Let $R$ be a correspondence between $[\Delta_{1}]$ and $[M]$ with finite distortion $\varepsilon$. Then any space from the image $R(\Delta_{1})$ lies at a distance from $M$ not exceeding $2\varepsilon$.
\end{theorem}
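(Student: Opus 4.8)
The plan is to combine the diameter bound of Lemma~\ref{lemDiamImage} with the scaling structure coming from the nontrivial stabilizer. First I would record the elementary consequence of finite distortion: since $R$ is surjective, $M$ has a preimage, i.e. there is $A \in [\Delta_1]$ with $(A,M) \in R$; comparing the pairs $(\Delta_1,Y)$ and $(A,M)$ gives $\bigl| |A,\Delta_1| - |Y,M| \bigr| \le \varepsilon$, hence
$$
|Y,M| \le |A,\Delta_1| + \varepsilon \qquad \text{for every } Y \in R(\Delta_1).
$$
Thus it suffices to produce a preimage of $M$ lying within $\varepsilon$ of $\Delta_1$, i.e. to show $\inf\{\,|A,\Delta_1| : (A,M)\in R\,\} \le \varepsilon$. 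Note that by Lemma~\ref{lemDiamImage} applied to $R^{-1}$ the whole set $R^{-1}(M)$ has diameter at most $\varepsilon$, so the remaining question concerns the \emph{location}, not the spread, of the preimage of the center.

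Next I would exploit the stabilizer. Fix $\mu \in \St\bigl([M]\bigr)$ with $\mu > 1$ (such $\mu$ exists because the stabilizer is nontrivial, and $\mu^{-1}$ lies in it as well). Since $\mu \Delta_1 = \Delta_1$ and $\mu M = M$, the map $H_\mu \colon X \mapsto \mu X$ is a homothety of each cloud fixing its center and scaling all distances by $\mu$; in particular $|\mu Y, M| = \mu |Y,M|$, so the triangle inequality yields $|\mu Y, Y| \ge (\mu-1)|Y,M|$, that is
$$
|Y,M| \le \frac{|\mu Y, Y|}{\mu - 1}.
$$
Because $\mu$ may be replaced by any power $\mu^{k}$, it is enough to bound the image displacement $|\mu^{k} Y, Y|$ by $2(\mu^{k}-1)\varepsilon$ for a suitable $k$; by the reduction of the first paragraph this is equivalent to the preimage bound $|A,\Delta_1| \le \varepsilon$. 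Here I would also use that $\mu^{-n} Y \to M$ (since $|\mu^{-n}Y,M| = \mu^{-n}|Y,M|$), together with completeness of clouds \cite{TuzhBog1} and the uniqueness of the center (Lemma~\ref{centerLemma}), to ensure that the only stabilizer-fixed target available to these estimates is $M$ itself.

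For the decisive step I would compare $R$ with its rescaling $\mu R = H_\mu R H_\mu^{-1}$, which has distortion $\mu\varepsilon$ and satisfies $(\Delta_1,\mu Y) \in \mu R$. Forming the composite correspondence $Q = (\mu R)\circ R^{-1}$ on $[M]$ gives $(Y,\mu Y)\in Q$ with $\operatorname{dis} Q \le (1+\mu)\varepsilon$, and the center-fixing $\mu M = M$ should force a near-fixed pair $(M,M')\in Q$ with $|M,M'|$ small; feeding these into the distortion inequality for $Q$ and letting $\mu \to \infty$ through powers of a fixed generator is intended to squeeze $|Y,M|$ down to $2\varepsilon$. I expect the main obstacle to be precisely the control of the additive error: because $R$ does not commute with the homothety ($\mu R \neq R$), the image of a contracted preimage of $M$ can a priori drift by an amount growing like $\mu\varepsilon$, which only recovers the useless bound $|Y,M| \le |A,\Delta_1| + \varepsilon$ with $|A,\Delta_1|$ uncontrolled. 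The heart of the argument must therefore use that $M$ is the \emph{unique} stabilizer-fixed space to prevent $R^{-1}(M)$ from escaping a bounded neighborhood of $\Delta_1$, so that the geometric contraction supplied by $H_{\mu^{-1}}$ dominates the accumulated $\varepsilon$'s and yields the sharp constant $2\varepsilon$, after which Lemma~\ref{lemDiamImage} transports the bound from one element of $R(\Delta_1)$ to all of them.
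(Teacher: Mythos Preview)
Your plan correctly isolates the reduction $|Y,M|\le |A,\Delta_1|+\varepsilon$ and the elementary bound $|Y,M|\le |\mu Y,Y|/(\mu-1)$, and you are also right that the composition $Q=(\mu R)\circ R^{-1}$ is useless because its distortion grows like $(1+\mu)\varepsilon$. But you never close this gap: the appeals to uniqueness of the center, to completeness, and to $\mu^{-n}Y\to M$ are not turned into any inequality, and nothing in your outline explains how ``the geometric contraction supplied by $H_{\mu^{-1}}$'' would dominate the accumulated $\varepsilon$'s rather than merely scale with them. The suggested rescue via uniqueness of the center does not lead anywhere concrete: that uniqueness is a zero-distance statement and carries no quantitative information about where $R^{-1}(M)$ sits. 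As written, the proposal stops precisely at the decisive step.

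The paper's argument supplies the missing mechanism, and it uses neither completeness nor uniqueness of the center; the key inputs you do not mention are the ultrametric inequality in $[\Delta_1]$ (Remark~\ref{remUltraMetric}) and a double pass through $R$ separated by a rescaling. One fixes $Y\in R(\Delta_1)$, scales up to $kY$ with large $k\in\operatorname{St}([M])$, chooses a preimage $X\in R^{-1}(kY)$, and considers $(1+\alpha)X$, $(1-\beta)X$ in $[\Delta_1]$. Pushing these to $[M]$ via $R$, dividing by $k$ (legal since $k$ lies in the stabilizer), and pulling back to $[\Delta_1]$ via $R^{-1}$ yields spaces $X_\alpha,X_\beta$ close to $\Delta_1$. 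Each transfer through $R$ costs an additive $\varepsilon$, but the intermediate division by $k$ shrinks the earlier costs to order $\varepsilon/k$; the ultrametric inequality $|X_\alpha,X_\beta|\le\max\bigl(|X_\alpha,\Delta_1|,|X_\beta,\Delta_1|\bigr)$ then forces $\rho:=|Y,kY|\le\bigl(1+\tfrac{2\alpha+2}{\beta}+\tfrac{2k}{\beta}\bigr)\varepsilon$, whence $|Y,M|\le\rho/(k-1)\to 2\varepsilon$ as $\beta\to 1$ and $k\to\infty$. The two ingredients your outline lacks are exactly this ultrametric constraint on the $[\Delta_1]$ side and the ``scale up, transfer, scale down, transfer back'' device that converts additive $\varepsilon$-errors into $\varepsilon/k$-errors.
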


\begin{proof}
The nontriviality of the stabilizer $[M]$ implies that there exists a number $l > 1$ such that $\{l^{j} \mid j\in \mathbb{Z}\}$ is a subgroup of $\operatorname{St}([M])$.

Fix $Y$ in the image of $\Delta_{1}$. Suppose that $|M, Y| = d > \varepsilon$. Denote $|Y, kY| = \rho$, where $k \geq 2$, $k = l^{j_{1}}$. By the triangle inequality $\rho + d \geq kd$, hence $\rho \geq (k-1)d > (k-1)\varepsilon$. Then $kY$ lies in the image of $X \neq \Delta_1$, with $\rho - \varepsilon \leq |X, \Delta_1| \leq \rho + \varepsilon$.

Take arbitrary $\alpha > 0$ and $\beta \in (0,1)$. For spaces $(1+\alpha)X$, $(1-\beta)X$ the following inequalities hold:
\begin{align*}
|X, (1+\alpha)X| &= \alpha |X, \Delta_1| \leq \alpha\rho + \alpha\varepsilon, \\
|X, (1-\beta)X| &= \beta|X, \Delta_1| \leq \beta\rho + \beta\varepsilon, \\
|(1+\alpha) X, (1-\beta)X| &= (\alpha + \beta)|X, \Delta_1| \geq (\alpha+\beta)\rho - (\alpha+\beta)\varepsilon.
\end{align*}

There exist $Y_\alpha, Y_\beta \in [M]$ such that $kY_\alpha \in R\big((1+\alpha)X\big)$, $kY_\beta \in R\big((1-\beta)X\big)$, and for them the following inequalities hold:
\begin{align*}
|kY, kY_\alpha| &\leq |X, (1+\alpha)X| + \varepsilon \leq \alpha\rho + (\alpha+1)\varepsilon, \\
|kY, kY_\beta| &\leq |X, (1-\beta)X| + \varepsilon \leq \beta\rho + (\beta+1)\varepsilon, \\
|kY_\alpha, kY_\beta| &\geq |(1+\alpha)X, (1-\beta)X| - \varepsilon \geq (\alpha+\beta)\rho - (\alpha+\beta+1)\varepsilon.
\end{align*}

Dividing these inequalities by $k$, we obtain:
\begin{align*}
|Y, Y_{\alpha}| &\leq \frac{\alpha}{k}\rho + \frac{\alpha+1}{k}\varepsilon, \\
|Y, Y_{\beta}| &\leq \frac{\beta}{k}\rho + \frac{\beta+1}{k}\varepsilon, \\
|Y_\alpha, Y_{\beta}| &\geq \frac{\alpha+\beta}{k}\rho - \frac{\alpha+\beta+1}{k}\varepsilon.
\end{align*}

Taking preimages of spaces $Y, Y_{\alpha}, Y_{\beta}$, we obtain:
\begin{align*}
|\Delta_1, X_{\alpha}| &\leq \frac{\alpha}{k}\rho + \left(\frac{\alpha+1}{k} + 1\right)\varepsilon, \\
|\Delta, X_{\beta}| &\leq \frac{\beta}{k}\rho + \left(\frac{\beta+1}{k}+1\right)\varepsilon, \\
|X_\alpha, X_{\beta}| &\geq \frac{\alpha+\beta}{k}\rho - \left(\frac{\alpha+\beta+1}{k}+1\right)\varepsilon.
\end{align*}

Assuming $\alpha > \beta$ we obtain:
$$
\frac{\alpha+\beta}{k}\rho - \left(\frac{\alpha+\beta+1}{k}+1\right)\varepsilon \leq \frac{\alpha}{k}\rho + \left(\frac{\alpha+1}{k} + 1\right)\varepsilon,
$$
which is equivalent to:
$$
\rho \leq \frac{k}{\beta}\left(\frac{2\alpha+\beta+2}{k}+2\right)\varepsilon,
$$
and further:
$$
\rho \leq \left(1+\frac{2\alpha + 2}{\beta} + 2\frac{k}{\beta}\right)\varepsilon.
$$

We are interested in an upper bound for $d$:
$$
d \leq \frac{\rho}{k-1} \leq \left(\frac{1}{k-1}+\frac{2\alpha + 2}{\beta(k-1)} + 2\frac{k}{\beta(k-1)}\right)\varepsilon.
$$

The last term in parentheses is strictly greater than 2 for any $k>2$, $\alpha>0$, $\beta\in (0,1)$, while the other terms tend to 0 as $k$ increases. Since the stabilizer is nontrivial, it contains sequences of numbers tending to both 0 and $\infty$. Taking $\beta$ to 1 and $k$ to infinity, we obtain the estimate:
$$
|Y, M| \leq 2\varepsilon,
$$
which completes the proof.
\end{proof}

\section{Cloud of the real line and the non-utlrametric unequality}
\markright{\thesection.~Cloud of the real line and the non-utlrametric unequality}

\begin{lemma}\label{lemmaDiamDist}
  If $X$ is a subset of the real line and $\mathbb{R}\setminus X$ contains an interval of diameter $2d$, then $|\mathbb{R}, X| \ge d$.
\end{lemma}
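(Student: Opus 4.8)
The plan is to prove $|\mathbb{R},X|=\tfrac12\inf_R\operatorname{dis}R\ge d$ by showing that \emph{every} correspondence $R\in\mathcal{R}(\mathbb{R},X)$ has $\operatorname{dis}R\ge 2d$. Write the missing interval as $(a,b)$ with $b-a=2d$ and midpoint $c=a+d$, so that $X\subseteq(-\infty,a]\cup[b,\infty)$ and every point of $X$ lies at real distance at least $d$ from $c$. Fix $R$ and suppose, for contradiction, that $\sigma:=\operatorname{dis}R<2d$; choose a selection $f\colon\mathbb{R}\to X$ with $(x,f(x))\in R$, so that $\bigl|\,|x-x'|-|f(x)-f(x')|\,\bigr|\le\sigma$ for all $x,x'$. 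First I would dispose of the degenerate case: if $X$ is bounded on one side (say $X\subseteq[b,\infty)$), then choosing $y_m\in X$ near $\inf X$ with preimage $x_m$ and comparing the partners of $x_m+T$ and $x_m-T$ forces both of those partners into an interval of length $2\sigma$ about $y_m+T$, so $|f(x_m+T)-f(x_m-T)|\le 2\sigma$ while $|(x_m+T)-(x_m-T)|=2T$, giving $\operatorname{dis}R\ge 2T-2\sigma\to\infty$. Hence $|\mathbb{R},X|=\infty$ and the statement is trivial; so we may assume $X$ is unbounded in both directions, and in particular meets both $(-\infty,a]$ and $[b,\infty)$.

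The heart of the argument is a sweeping step showing that $f(\mathbb{R})$ lies entirely on one side of the gap. Since $f(c)\in X$, assume without loss of generality (otherwise reflect) that $f(c)\ge b$. If some $x^{\ast}$ had $f(x^{\ast})\le a$, I would join $x^{\ast}$ to $c$ by finitely many points $t_0=x^{\ast}<\dots<t_n=c$ with consecutive spacing strictly less than $2d-\sigma$; then $|f(t_{i+1})-f(t_i)|\le (t_{i+1}-t_i)+\sigma<2d$ for every $i$, yet the values $f(t_i)$ start in $(-\infty,a]$, end in $[b,\infty)$, and avoid the open gap, so some consecutive pair must jump from $\le a$ to $\ge b$, a jump of at least $b-a=2d$, a contradiction. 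Hence $f(x)\ge b$ for all $x$, i.e.\ $f(\mathbb{R})\subseteq[b,\infty)$. I expect this no-crossing step to be the main obstacle, since it is exactly where the connectedness of $\mathbb{R}$ (carried across by the finite chain) is played off against the metric gap of $X$; the selection $f$ need not be continuous, and the whole force of the $2d$ comes from forbidding a single step to straddle the gap.

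To finish, I would invoke surjectivity of the correspondence onto $X$. Since $X$ meets $(-\infty,a]$, there is a point $y_-\in X$ with $y_-\le a$; pick $x_-$ with $(x_-,y_-)\in R$. The two pairs $(x_-,y_-)$ and $(x_-,f(x_-))$ of $R$ share their first coordinate, so by the definition of distortion $\operatorname{dis}R\ge\bigl|\,|x_--x_-|-|y_--f(x_-)|\,\bigr|=f(x_-)-y_-\ge b-a=2d$, contradicting $\sigma<2d$. Therefore $\operatorname{dis}R\ge 2d$ for every correspondence $R$, whence $|\mathbb{R},X|\ge d$, as claimed.
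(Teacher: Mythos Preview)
Your argument is correct. The degenerate-case paragraph is a little loosely worded (``say $X\subseteq[b,\infty)$'' really only needs $\inf X>-\infty$, and the conclusion $|\mathbb{R},X|=\infty$ follows because your $2T-2\sigma$ bound rules out \emph{any} finite distortion, not just $\sigma<2d$), but the mathematics goes through as written.

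Your route is genuinely different from the paper's. The paper argues via the realization definition: assuming a realization $(\mathbb{R}',X',Z)$ with $d_H(\mathbb{R}',X')=d'<d$, it covers $\mathbb{R}'$ by two disjoint open sets $U_1,U_2$ built from balls of radius $\tfrac{d+d'}{2}$ around the points of $X'$ lying to the left, respectively right, of the gap, and invokes topological connectedness of $\mathbb{R}'$ for a contradiction. You instead stay entirely inside the correspondence definition and replace topological connectedness by a discrete chain argument: a selection $f$ with $\operatorname{dis}R<2d$ cannot jump across the $2d$-gap along a chain of step $<2d-\sigma$, so $f(\mathbb{R})$ is trapped on one side, and surjectivity onto $X$ then forces a single fibre of diameter $\ge 2d$. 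The paper's version is shorter and more conceptual; yours is more hands-on, avoids passing to an ambient space, and gives the lower bound $\operatorname{dis}R\ge 2d$ explicitly, which fits well with how correspondences are used elsewhere in the paper.
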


\begin{proof}
Suppose that
$\mathbb{R} \setminus X$ contains an interval $(a-d, a+d)$. Suppose $|\mathbb{R}, X|<d$. Let $(\mathbb{R}^{\prime}, X^{\prime}, Y)$ be a realization of $(\mathbb{R}, X)$ with $d_H(\mathbb{R}^{\prime}, X^{\prime}) = d^{\prime}<d$. Define
$$
U_1 := \bigcup_{\substack{x \in X^{\prime} \\ x \leq a-d}} B\left(x, d^{\prime}+\frac{d-d^{\prime}}{2}\right), \quad 
U_2 := \bigcup_{\substack{x \in X^{\prime} \\ x \geq a+d}} B\left(x, d^{\prime}+\frac{d-d^{\prime}}{2}\right),
$$
so $U_1 \cup U_2$ covers $X^{\prime}$ with balls of radius $d^\prime + \frac{d-d^{\prime}}{2}$. 

Then $U_1$ and $U_2$ are two disjoint open sets, but $\mathbb{R}^{\prime} \subset U_1 \cup U_2$, which contradicts the connectedness of the real line.
\end{proof}

For the cloud $[\Delta_{1}]$, Remark \ref{remUltraMetric} shows that the ultrametric inequality holds. The following lemma demonstrates that this inequality may fail for the cloud $[\mathbb{R}]$.

Consider $\mathbb{R}$ as a subset of $\mathbb{R}^2$ and add the point $(0,1)$ with distances given by the $L_1$ metric in $\mathbb{R}^2$. Denote this space by $\widetilde{\mathbb{R}}$.

\begin{theorem}\label{thrmRUltraMetric}
For the spaces $\mathbb{Z}$ and $\widetilde{\mathbb{R}}$, the following hold\/\rom{:}
\begin{enumerate}
  \item $|\mathbb{Z}, \mathbb{R}| \le \frac{1}{2}$, $|\widetilde{\mathbb{R}}, \mathbb{R}| \le \frac{1}{2}$.\label{thrmPt:1}
  \item $|\mathbb{Z}, \widetilde{\mathbb{R}}|>  \frac{1}{2}$.\label{thrmPt:2}
\end{enumerate}
\end{theorem}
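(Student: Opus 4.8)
The plan is to prove the three inequalities separately, with the delicate one being the strict lower bound in part~\ref{thrmPt:2}. For the first upper bound in part~\ref{thrmPt:1}, I would exhibit an explicit correspondence between $\mathbb{Z}$ and $\mathbb{R}$: send each real number $t$ to the integers within distance $\tfrac12$ of it (equivalently, use the rounding map and its graph). For two pairs $(m,s),(n,t)$ in this correspondence one has $|\,|m-n|-|s-t|\,|\le 1$ because each integer differs from its associated real by at most $\tfrac12$, so $\dis R\le 1$ and hence $|\mathbb{Z},\mathbb{R}|\le\tfrac12$. For $|\widetilde{\mathbb{R}},\mathbb{R}|\le\tfrac12$, I would use that $\widetilde{\mathbb{R}}$ is just $\mathbb{R}$ with a single extra point $(0,1)$ adjoined; the Hausdorff distance between $\mathbb{R}$ (as the copy sitting inside $\widetilde{\mathbb{R}}$) and all of $\widetilde{\mathbb{R}}$ is the distance from the extra point to the real axis, namely $1$ in the $L_1$ metric, giving the bound $\tfrac12$ directly from the realization definition.

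For the strict inequality in part~\ref{thrmPt:2}, the strategy is proof by contradiction combined with Lemma~\ref{lemmaDiamDist}. Suppose $|\mathbb{Z},\widetilde{\mathbb{R}}|\le\tfrac12$; then for any $\delta>0$ there is a realization with Hausdorff distance below $\tfrac12+\delta$. The key geometric feature to exploit is the extra point $p=(0,1)$ of $\widetilde{\mathbb{R}}$: in the $L_1$ metric its distance to the point $0\in\mathbb{R}$ is $1$, while its distance to any integer $n\neq 0$ is $|n|+1\ge 2$. So $p$ is ``close'' (distance $1$) to exactly one point of the real line and uniformly far from all of $\mathbb{Z}$. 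I would track where $p$ must go under a near-optimal correspondence and derive that its image point, together with the images of nearby integers, forces the image of $\mathbb{Z}$ to leave a gap in $\mathbb{R}$ — a gap too wide to be consistent with a small Hausdorff distance, contradicting Lemma~\ref{lemmaDiamDist}.

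Concretely, I would argue as follows. Realize $\mathbb{Z}$ and $\widetilde{\mathbb{R}}$ inside a common space $Z$ with $d_H<\tfrac12+\delta$. The copy of $\widetilde{\mathbb{R}}$ contains a copy of $\mathbb{R}$, and since $|\mathbb{R},\mathbb{Z}|$ behaves well, the images of $\mathbb{Z}$ must approximate all of this real line to within roughly $\tfrac12$. The point $p$ sits at distance $1$ from the real axis, so no integer image can lie within $\tfrac12+\delta$ of both $p$ and a point of the axis directly ``below'' it unless that integer image is essentially at the foot of $p$. Analyzing the two integers whose images flank the foot of $p$, their mutual distance (an integer, hence $\ge 1$) combined with the metric constraints from $p$ produces an interval of the real line of length approaching $1$ that is avoided by the integer images; passing this omission back through the isometric copy of $\mathbb{R}$ inside $\widetilde{\mathbb{R}}$ and invoking Lemma~\ref{lemmaDiamDist} with $d$ slightly below $\tfrac12$ yields $|\mathbb{R},\mathbb{Z}|>\tfrac12-\delta'$ in a way that contradicts the assumed bound.

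The main obstacle I anticipate is the bookkeeping in this last step: making precise how the extra point $p$ forces a genuine omitted interval in the image of $\mathbb{Z}$, and quantifying its length so that Lemma~\ref{lemmaDiamDist} applies with the right constant. The subtlety is that one must use both the closeness of $p$ to a single real point and its uniform remoteness from the integer lattice simultaneously, and then transfer the resulting gap faithfully through the realization; keeping the $\delta$-slack consistent throughout so that the final strict inequality survives is where the argument requires care.
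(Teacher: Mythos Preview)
Your bound $|\widetilde{\mathbb{R}},\mathbb{R}|\le\tfrac12$ is not obtained by the realization you describe. Sitting $\mathbb{R}$ inside $\widetilde{\mathbb{R}}$ as the $x$-axis gives Hausdorff distance equal to $d\big((0,1),(0,0)\big)=1$, and the realization definition then only yields $|\widetilde{\mathbb{R}},\mathbb{R}|\le 1$, not $\tfrac12$; there is no extra factor of $\tfrac12$ in the Hausdorff formulation. The paper fixes this by embedding \emph{both} spaces into $(\mathbb{R}^2,L_1)$: $\widetilde{\mathbb{R}}$ in the obvious way and $\mathbb{R}$ as the horizontal line at height $\tfrac12$, so that every axis point and the extra point $(0,1)$ are each exactly $\tfrac12$ from the shifted line.

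For part~\ref{thrmPt:2} your outline is in the right spirit but, as you yourself flag, the endgame is not pinned down, and as written it drifts toward the wrong conclusion: you end up bounding $|\mathbb{R},\mathbb{Z}|$ from below by something just under $\tfrac12$, which is consistent with $|\mathbb{R},\mathbb{Z}|\le\tfrac12$ and contradicts nothing. The paper avoids the realization bookkeeping entirely and works with a correspondence $R$ of distortion $1+\varepsilon$. The key move is not to chase an omitted interval directly, but to \emph{excise} the trouble: let $i\in\mathbb{Z}$ correspond to $(0,1)$, delete from $R$ all pairs whose $\widetilde{\mathbb{R}}$-coordinate lies in $\{(0,1)\}\cup\big((-\varepsilon,\varepsilon)\times\{0\}\big)$, and observe that what remains is a genuine correspondence between $\mathbb{Z}\setminus\mathcal{N}$ (with at least the integer $i$ removed, hence a gap of length $\ge 2$) and $\mathbb{R}\setminus(-\varepsilon,\varepsilon)$. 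Now Lemma~\ref{lemmaDiamDist} gives $|\mathbb{R},\mathbb{Z}\setminus\mathcal{N}|\ge 1$, the triangle inequality with $|\mathbb{R},\mathbb{R}\setminus(-\varepsilon,\varepsilon)|\le\varepsilon$ gives $|\mathbb{R}\setminus(-\varepsilon,\varepsilon),\mathbb{Z}\setminus\mathcal{N}|\ge 1-\varepsilon$, and comparing with $\tfrac12(1+\varepsilon)$ forces $\varepsilon\ge\tfrac13$, i.e.\ $|\mathbb{Z},\widetilde{\mathbb{R}}|\ge\tfrac23$. The excision-then-triangle step is the missing idea in your sketch.
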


\begin{proof}
(1) Embedding $\mathbb{Z}$ in $\mathbb{R}$ gives a realization with Hausdorff distance $\frac{1}{2}$. For $\widetilde{\mathbb{R}}$, embed it naturally in $\mathbb{R}^2$ and $\mathbb{R}$ as $\{(x, \frac{1}{2}) : x \in \mathbb{R}\}$; the Hausdorff distance is again $\frac{1}{2}$.

(2) Let $R$ be a correspondence between $\mathbb{Z}$ and $\widetilde{\mathbb{R}}$ with distortion $1 + \varepsilon$, where $(0,1)$ is in the image of some $i \in \mathbb{Z}$. By Lemma \ref{lemDiamImage}, the image of $i$ must lie in $(-\varepsilon, \varepsilon) \times \{0\} \cup \{(0,1)\}$. 

Let $\mathcal{N}$ be the set of integers whose images lie in $(-\varepsilon, \varepsilon) \times \{0\} \cup \{(0,1)\}$. Then $\mathbb{Z} \setminus \mathcal{N}$ is at \mbox{distance $\geq 1$} from $\mathbb{R}$ by Lemma \ref{lemmaDiamDist}.

Define $R'$ by removing from $R$:
\begin{itemize}
    \item The pair $(i,(0,1))$,
    \item all pairs $(k,x)$ with $x \in (-\varepsilon, \varepsilon) \times \{0\}$.
\end{itemize}

Then $R'$ is a correspondence between $\mathbb{R}\setminus(-\varepsilon, \varepsilon) \times \{0\}$ and $\mathbb{Z} \setminus \mathcal{N}$ with distortion:
$$
1+\varepsilon \geq \operatorname{dis} R' \geq 2\big|\mathbb{R}\setminus(-\varepsilon, \varepsilon), \mathbb{Z} \setminus \mathcal{N}\big|.
$$
By the triangle inequality:
$$
2\big|\mathbb{R}\setminus(-\varepsilon, \varepsilon), \mathbb{Z} \setminus \mathcal{N}\big| \geq 2(1 - \varepsilon).
$$
This yields $1+\varepsilon \geq 2-2\varepsilon$, hence $\varepsilon \geq \frac{1}{3}$ and consequently:
$$
|\widetilde{\mathbb{R}}, \mathbb{Z}| \geq \frac{2}{3} > \frac{1}{2}.
$$
\end{proof}

\section{Gromov--Hausdorff distance betweem clouds}
\markright{\thesection.~Gromov--Hausdorff distance betweem clouds}

We present a lemma about the distance between clouds with intersecting stabilizers.

\begin{lemma}\label{lemmaDist}
If two clouds have a nontrivial intersection of their stabilizers, then the Gromov--Hausdorff distance between them can only be $0$ or $\infty$.
\end{lemma}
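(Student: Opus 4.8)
The plan is to exploit the scaling operation $H_\lambda$ as a common symmetry of the two clouds. Fix some $\lambda \neq 1$ in the intersection $\operatorname{St}([X]) \cap \operatorname{St}([Y])$, which is nontrivial by hypothesis. Since this intersection of subgroups is itself a subgroup of $\mathbb{R}_+$, it also contains $1/\lambda$. Because $\lambda, 1/\lambda$ lie in the stabilizer of each cloud, the map $A \mapsto \lambda A$ sends $[X]$ into $[X]$ and $[Y]$ into $[Y]$, and $H_{1/\lambda}$ provides a two-sided inverse; hence $H_\lambda$ restricts to a bijection of each cloud onto itself. This is the structural input that will let me transport correspondences.

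Next I would take an arbitrary correspondence $R \in \mathcal{R}([X],[Y])$ and define its rescaling $R_\lambda := \{(\lambda A, \lambda B) : (A,B) \in R\}$. Because $H_\lambda$ bijects each cloud, $R_\lambda$ is again a surjective multivalued map, i.e.\ a genuine correspondence between $[X]$ and $[Y]$. The key computation is that scaling multiplies distortion by exactly $\lambda$: applying the identity $|\lambda A, \lambda A'| = \lambda|A,A'|$ (valid for arbitrary metric spaces, and recorded in the preliminaries) inside both clouds gives
\[
\operatorname{dis} R_\lambda = \sup \big\{ \big| \lambda|A,A'| - \lambda|B,B'| \big| : (A,B),(A',B') \in R\big\} = \lambda \operatorname{dis} R.
\]
Iterating, a correspondence of distortion $r$ yields, for every $n \in \mathbb{Z}$, a correspondence of distortion $\lambda^{n} r$.

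Finally I would invoke the structure of nontrivial subgroups of $\mathbb{R}_+$: any such subgroup contains a copy of $\{\mu^{n} : n \in \mathbb{Z}\}$ for some $\mu > 1$, and therefore contains elements tending to $0$. Consequently, if some correspondence has finite distortion $r$, then the rescalings by $\mu^{-n}$ produce correspondences of distortion $\mu^{-n} r \to 0$, so the infimum defining $d_{GH}([X],[Y])$ equals $0$. If instead \emph{every} correspondence has infinite distortion, then by definition the distance is $\infty$. Since these two alternatives are exhaustive, the distance is either $0$ or $\infty$, as claimed.

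The one point I would treat as the main obstacle is verifying that $R_\lambda$ is genuinely a correspondence --- that is, that it stays surjective onto both clouds. This is precisely where the group property of the stabilizer (rather than the mere existence of one scaling factor) is essential, since invertibility of $H_\lambda$ on each cloud is what guarantees surjectivity; the distortion-scaling identity and the subgroup fact about $\mathbb{R}_+$ are then routine.
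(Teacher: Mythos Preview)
Your argument is correct and is essentially the paper's proof unpacked: the paper records the identity $\big|\lambda[X],\lambda[Y]\big| = \lambda\big|[X],[Y]\big|$ (which is exactly your correspondence-rescaling computation together with the bijectivity of $H_\lambda$ on each cloud) and then, for $\lambda \neq 1$ in both stabilizers, reads off $d = \lambda d$, forcing $d \in \{0,\infty\}$ in one algebraic step rather than by iteration. The surjectivity concern you flag is indeed handled by the group property of the stabilizer, and is implicit in the paper's use of $\lambda[X] = [X]$.
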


\begin{proof}
For any clouds $[X], [Y]$ and any $\lambda \in \mathbb{R}^{+}$, we have
$$
\big|\lambda[X], \lambda[Y]\big| = \lambda\big|[X], [Y]\big|.
$$
If $\lambda \neq 1$ belongs to the stabilizers of both clouds, then
$$
\big|[X],[Y]\big| = \big|\lambda[X], \lambda[Y]\big| = \lambda\big|[X], [Y]\big|.
$$
Since $\lambda \neq 1$, the quantity $\big|[X],[Y]\big|$ can only be $0$ or $\infty$.
\end{proof}

\begin{theorem}\label{thrmDist}
Let $[Z]$ be a cloud with a nontrivial stabilizer which has $Z$ as its center. Suppose there exist spaces $Y_{1}, Y_{2} \in [Z]$ such that
\begin{enumerate}
    \item $\max\{ |Y_{1},Z|, |Y_{2}, Z| \} = r > 0$,
    \item $|Y_{1}, Y_{2}| > r$.
\end{enumerate}
Then the Gromov--Hausdorff distance between clouds $[\Delta_1]$ and $[Z]$ is infinite.
\end{theorem}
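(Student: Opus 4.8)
The plan is to argue by contradiction. Suppose $\big|[\Delta_1],[Z]\big| < \infty$; then there is a correspondence $R \in \mathcal{R}([\Delta_1],[Z])$ with finite distortion $\varepsilon := \operatorname{dis} R$. The strategy combines three ingredients: the Center image theorem (Theorem~\ref{thrmCenterImage}), which pins the image $R(\Delta_1)$ near the center $Z$; the nontrivial stabilizer of $[Z]$, which lets me scale the non-ultrametric triple $(Y_1,Y_2,Z)$ up without bound while fixing $Z$; and the ultrametric inequality in $[\Delta_1]$ (Remark~\ref{remUltraMetric}), which the pulled-back configuration must obey. The whole point is that an \emph{amplified} violation of the ultrametric inequality inside $[Z]$ must, after pulling back, respect the \emph{rigid} ultrametric bound inside $[\Delta_1]$, and these two demands are incompatible.

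First I would fix $l>1$ in $\operatorname{St}([Z])$ and a space $W \in R(\Delta_1)$; by Theorem~\ref{thrmCenterImage} we have $|W,Z| \le 2\varepsilon$. Since $Z$ is the center, $l^n Z = Z$ for every $n$, so the scaled spaces satisfy $l^n Y_i \in [Z]$ together with $|l^n Y_i, Z| = l^n|Y_i,Z| \le l^n r$ and $|l^n Y_1, l^n Y_2| = l^n|Y_1,Y_2|$. Because $R$ is surjective onto $[Z]$, I may choose preimages $B_1,B_2 \in [\Delta_1]$ with $l^n Y_1 \in R(B_1)$ and $l^n Y_2 \in R(B_2)$.

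Next I would estimate the pulled-back distances. Using $(\Delta_1,W),(B_i,l^n Y_i) \in R$, the distortion bound, the triangle inequality, and $|W,Z|\le 2\varepsilon$, $|Y_i,Z|\le r$,
\begin{align*}
|B_i,\Delta_1| &\le |l^n Y_i, W| + \varepsilon \le l^n|Y_i,Z| + |Z,W| + \varepsilon \le l^n r + 3\varepsilon,
\end{align*}
while from $(B_1,l^nY_1),(B_2,l^nY_2)\in R$,
\begin{align*}
|B_1,B_2| &\ge |l^n Y_1, l^n Y_2| - \varepsilon = l^n|Y_1,Y_2| - \varepsilon.
\end{align*}
Applying the ultrametric inequality of Remark~\ref{remUltraMetric} to $B_1,B_2 \in [\Delta_1]$ then gives $|B_1,B_2| \le \max\{|B_1,\Delta_1|,|B_2,\Delta_1|\} \le l^n r + 3\varepsilon$.

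Combining the two estimates yields $l^n|Y_1,Y_2| - \varepsilon \le l^n r + 3\varepsilon$, that is, $l^n\big(|Y_1,Y_2| - r\big) \le 4\varepsilon$. By hypothesis $|Y_1,Y_2| - r > 0$, so the left-hand side tends to $+\infty$ as $n \to \infty$, contradicting the finiteness of the fixed number $\varepsilon$. Hence no finite-distortion correspondence exists and $\big|[\Delta_1],[Z]\big| = \infty$. I expect the only genuinely delicate point to be the amplification step: one must use that scaling by a stabilizer element fixes $Z$ \emph{exactly} (so the distances $|l^n Y_i, Z|$ really grow like $l^n r$ rather than merely staying in the cloud), and that the preimages land in the bounded cloud $[\Delta_1]$, where the ultrametric inequality is genuinely available. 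Once the Center image theorem localizes $R(\Delta_1)$ near $Z$ and the triple is scaled up, the contradiction is immediate.
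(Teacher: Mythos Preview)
Your argument is correct. The core mechanism --- localize $R(\Delta_1)$ near $Z$ via Theorem~\ref{thrmCenterImage}, pull back $Y_1,Y_2$ to $[\Delta_1]$, and confront the resulting estimates with the ultrametric inequality of Remark~\ref{remUltraMetric} --- is exactly the paper's. The difference is in how the conclusion ``distance $=\infty$'' is extracted. The paper first invokes Lemma~\ref{lemmaDist} (the stabilizers of $[\Delta_1]$ and $[Z]$ intersect nontrivially, so the distance is $0$ or $\infty$), and then shows that \emph{every} correspondence has distortion at least $cr/4$ where $|Y_1,Y_2|=(1+c)r$; this rules out $0$. You instead bypass Lemma~\ref{lemmaDist} entirely: you keep a single correspondence $R$ of finite distortion $\varepsilon$ and amplify the witnesses by $l^n$ using $l^nZ=Z$, obtaining $l^n\bigl(|Y_1,Y_2|-r\bigr)\le 4\varepsilon$ for all $n$, which is absurd. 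Your route is a bit more self-contained (no auxiliary $0$-or-$\infty$ lemma needed) and makes the role of the stabilizer visible inside the main estimate; the paper's route isolates Lemma~\ref{lemmaDist} as a reusable general fact and then only needs a fixed positive lower bound on distortion. Both are short and the arithmetic is identical.
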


\begin{proof}
The clouds $[\Delta_1]$ and $[Z]$ have stabilizers with nontrivial intersection. By Lemma \ref{lemmaDist}, the distance between them can only be $0$ or $\infty$.

To prove the theorem, it suffices to show that the Gromov--Hausdorff distance cannot be $0$. We need to establish that no correspondence with arbitrarily small distortion can exist between them. Let $R$ be a correspondence between $[\Delta_1]$ and $[Z]$ with $\operatorname{dis} R = \varepsilon < \infty$.

Fix $Y \in R(\Delta_1)$. By Theorem \ref{thrmCenterImage}, the Gromov--Hausdorff distance between $Y$ and $Z$ is at most $2\varepsilon$.

The theorem's conditions give:
$$
\max\{ |Y_{1},Z|, |Y_{2}, Z| \} = r < |Y_{1}, Y_{2}|.
$$
This implies there exists $c > 0$ such that $|Y_{1},Y_{2}| = (1 + c)r$. Consider the preimages $X_1 \in R^{-1}(Y_{1})$, $X_2 \in R^{-1}(Y_{2})$. We obtain:
$$
|X_1, \Delta_1| \leq |Y_{1}, Y| + \varepsilon \leq r + 3\varepsilon.
$$
The same is true for $X_2$, while:
$$
|X_1, X_2| \geq |Y_{1}, Y_{2}| - \varepsilon = (1+c)r - \varepsilon.
$$
By Remark \ref{remUltraMetric}:
$$
(1+c)r - \varepsilon \leq r + 3\varepsilon.
$$
which yields:
$$
\varepsilon \geq \frac{cr}{4}.
$$
This lower bound for $\varepsilon = \operatorname{dis} R$ shows the distortion cannot be arbitrarily small, hence the Gromov--Hausdorff distance cannot be $0$ and must therefore be infinite.
\end{proof}

\begin{corollary}
In the cloud $[\mathbb{R}]$, we can take $Y_{1} = \mathbb{Z}$ and $Y_{2} = \widetilde{\mathbb{R}}$. By Theorem \ref{thrmRUltraMetric}, they satisfy the conditions of Theorem \ref{thrmDist} with $r = \frac{1}{2}$. The stabilizer of $[\mathbb{R}]$ is $\mathbb{R}^{+}$ (nontrivial). Therefore $\big|[\Delta_{1}],[\mathbb{R}]\big| = \infty$.
\end{corollary}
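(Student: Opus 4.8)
The plan is to invoke Theorem \ref{thrmDist} with the cloud $[Z] = [\mathbb{R}]$, so that the entire argument reduces to checking that $[\mathbb{R}]$ together with the two witness spaces $Y_1 = \mathbb{Z}$ and $Y_2 = \widetilde{\mathbb{R}}$ satisfies every hypothesis of that theorem. None of the genuine difficulty lives in this corollary: the two hard inputs --- the metric estimates of Theorem \ref{thrmRUltraMetric} and the center-image machinery behind Theorem \ref{thrmDist} --- are already established, so what remains is pure bookkeeping.

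First I would pin down the center of $[\mathbb{R}]$. For every $\lambda > 0$ the homothety $H_\lambda$ sends $\mathbb{R}$ to $\lambda\mathbb{R}$, which is isometric to $\mathbb{R}$, so $|\lambda\mathbb{R}, \mathbb{R}| = 0$; hence $\mathbb{R}$ is fixed (up to zero distance) by the full group $\mathbb{R}_+$. Since $\St\bigl([\mathbb{R}]\bigr) = \mathbb{R}_+$, this exhibits $\mathbb{R}$ as a space fixed by the entire stabilizer, so by the uniqueness clause of Lemma \ref{centerLemma} I may take $Z = \mathbb{R}$ as the center, and I note at once that the stabilizer $\mathbb{R}_+$ is nontrivial, as Theorem \ref{thrmDist} demands.

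Next I would verify membership and the two numerical conditions. Part (\ref{thrmPt:1}) of Theorem \ref{thrmRUltraMetric} gives $|\mathbb{Z}, \mathbb{R}| \le \tfrac12$ and $|\widetilde{\mathbb{R}}, \mathbb{R}| \le \tfrac12$, both finite, so $\mathbb{Z}, \widetilde{\mathbb{R}} \in [\mathbb{R}]$. Setting $r := \max\{|\mathbb{Z},\mathbb{R}|,\, |\widetilde{\mathbb{R}},\mathbb{R}|\} \le \tfrac12$, Part (\ref{thrmPt:2}) gives $|\mathbb{Z}, \widetilde{\mathbb{R}}| > \tfrac12 \ge r$, which is precisely condition~(2) of Theorem \ref{thrmDist}. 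The positivity $r > 0$ required by condition~(1) I would obtain for free from the triangle inequality $|\mathbb{Z}, \widetilde{\mathbb{R}}| \le |\mathbb{Z}, \mathbb{R}| + |\mathbb{R}, \widetilde{\mathbb{R}}| \le 2r$, whence $r \ge \tfrac12 |\mathbb{Z}, \widetilde{\mathbb{R}}| > \tfrac14 > 0$; thus I need not compute the exact value of $r$ separately, although the lower bound of Lemma \ref{lemmaDiamDist}, applied to the unit gap $(0,1) \subset \mathbb{R}\setminus\mathbb{Z}$, does pin it down to $r = \tfrac12$.

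With all hypotheses of Theorem \ref{thrmDist} confirmed, that theorem yields $\bigl|[\Delta_1], [\mathbb{R}]\bigr| = \infty$ directly. The only point requiring any care --- and it is the thing I would flag rather than call an obstacle --- is the reduction that condition~(1) is automatic: I would make sure $r > 0$ is seen to follow from the triangle inequality and Part~(\ref{thrmPt:2}), rather than from an independent distance computation. Beyond that there is no real difficulty, since the whole substance of the result is carried by Theorems \ref{thrmRUltraMetric} and \ref{thrmDist}.
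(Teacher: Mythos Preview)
Your proposal is correct and follows the same route as the paper: verify the hypotheses of Theorem~\ref{thrmDist} for $[Z]=[\mathbb{R}]$ using the estimates of Theorem~\ref{thrmRUltraMetric}, then conclude. You are in fact more careful than the paper, which simply asserts $r=\tfrac12$ in the statement of the corollary; your observations that $\mathbb{R}$ is the center of $[\mathbb{R}]$ and that $r>0$ follows automatically from the triangle inequality together with Part~(\ref{thrmPt:2}) fill in details the paper leaves implicit.
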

\markright{References}


\begin{thebibliography}{99}
    
\bibitem{Edwards} D. Edwards, \emph{The Structure of Superspace. In: Studies in Topology}, ed. by Stavrakas N.M. and Allen K.R., New York, London, San Francisco, Academic Press, 1975.

\bibitem{Gromov81} M. Gromov, \emph{Structures m\'etriques pour les vari\'et\'es riemanniennes}, edited by Lafontaine and Pierre Pansu, 1981.

\bibitem{Gromov99} M. Gromov, \emph{Metric structures for Riemannian and non-Riemannian spaces}, Birkh\"auser, ISBN 0-8176-3898-9 (translation with additional content), 1999

\bibitem{memoli1}
F. M\'emoli and S. Guillermo, \emph{Comparing point clouds}, In Proceedings of the 2004 Eurographics/ACM SIGGRAPH symposium on Geometry processing (SGP '04). Association for Computing Machinery, New York, NY, USA, 32-40. https://doi.org/10.1145/1057432.1057436, 2004.

\bibitem{robotics} F. Sukkar, J. Wakulicz, K. M. B. Lee, W. Zhi and R. Fitch, \emph{Multi-query Robotic Manipulator Task Sequencing with Gromov--Hausdorff Approximations}, ArXiv e-prints, arXiv:2209.04800, 2024.

\bibitem{memoli2}
F. M\'emoli, \emph{Gromov--Hausdorff distances in Euclidean spaces}, IEEE Computer Society Conference on Computer Vision and Pattern Recognition Workshops, Anchorage, AK, USA, pp. 1-8, 2008.

\bibitem{TuzhBog1}
S.\,A. Bogatyy and A.\,A. Tuzhilin, \emph{Gromov--Hausdorff class: its completeness and cloud geometry}, ArXiv e-prints,
arXiv:2110.06101, 2021.

\bibitem{Neumann} 
J. von Neumann, \emph{Eine Axiomatisierung der Mengenlehre}, Journal f\"ur die Reine und Angewandte Mathematik, 1925.

\bibitem{Bernays}
P. Bernays, \emph{A System of Axiomatic Set Theory--Part I}, The Journal of Symbolic Logic, doi:10.2307/2268862, JSTOR 2268862, 1937.

\bibitem{Godel}
K. G\"odel, \emph{The Consistency of the Axiom of Choice and of the Generalized Continuum Hypothesis with the Axioms of Set Theory (Revised ed.)}, Princeton University Press,  ISBN 978-0-691-07927-1, 1940.

\bibitem{BorIvTuzh1}
S.\,I. Borzov, A.\,O. Ivanov and A.\,A. Tuzhilin, \emph{Extendability of Metric Segments in Gromov--Hausdorff Distance}, ArXiv e-prints, arXiv:2009.00458, 2020.

\bibitem{TuzhBog2}
S.\,I. Bogataya, S.\,A. Bogatyy, S.\,A., Redkozubov, V.\,V. \& Tuzhilin A.\,A. , Clouds in Gromov--Hausdorff Class: their completeness and centers, ArXiv e-prints, arXiv:2202.07337, 2022.

\bibitem{Lectures}
D. Burago, Y.\,D. Burago, and S.\,O. Ivanov, \emph{A Course in Metric Geometry}, American Mathematical Soc., 2001.

\bibitem{BogBog1}
S.\,I. Bogataya and S.\,A. Bogatyy, \emph{Isometric Cloud Stabilizer}, Topology and its Applications, Volume 329, 2023.

\bibitem{levySet}A. Levy, \emph{Basic set theory}, Perspectives in mathematical logic, Springer-Verlag, Berlin, Heidelberg, and New York, 1979.

\end{thebibliography}
\end{document}